\documentclass[11pt]{article}
\usepackage{amsmath,amssymb,amsthm}
\usepackage[margin=1in]{geometry}
\newtheorem{theorem}{Theorem}

\newtheorem{lemma}[theorem]{Lemma}

\theoremstyle{definition}

\usepackage{tikz}
\usepackage{float}
\usepackage[colorlinks=true, citecolor=green, linkcolor=blue, urlcolor=blue]{hyperref}
\usepackage{titling}
\usepackage{comment}

\newenvironment{mainthm}[1]
  {\trivlist\item[\hskip\labelsep{\bfseries #1.}]\itshape\ignorespaces}
  {\endtrivlist}

\title{Regular Graphs Without Isomorphic Bisections}
\author{Elie Feinsilber\\
{\small\textit{efeinsilber@wesleyan.edu}}}
\date{}
\begin{document}
\maketitle

\begin{abstract}
A graph is bisectable if its vertex set can be split into two sets inducing isomorphic
subgraphs. Ando conjectured that every cubic graph is bisectable, and Das, Pokrovskiy, and Sudakov proved it for large enough connected cubic graphs. We give a constructive proof that, for every $d\geq5$, there exists a non-bisectable $d$-regular graph of even order. 
\end{abstract}

\noindent\textbf{Keywords:}
Graphs; regular graphs; isomorphic bisections; Ando's conjecture;
graph partitions. 
\begingroup
\renewcommand{\thefootnote}{}
\footnotemark\footnotetext{\textbf{2020 MSC:} 05C60 (Primary); 05C07 (Secondary), 05C15.}
\endgroup
\section{Introduction}

Partitions of a graph into isomorphic pieces have a long history, but mostly for edges. Isomorphic factorisations, i.e. partitions of $E(G)$ into pairwise isomorphic spanning subgraphs,
were studied systematically by Harary, Robinson and Wormald in a series of papers beginning with \cite{HRW}. 
Jacobson and Sch\"onheim separately asked for the largest $s$ such that every graph with $m$ edges contains two
edge-disjoint isomorphic subgraphs with $s$ edges; after bounds of Erd\H{o}s, Pach and Pyber
\cite{EPP}, Lee, Loh and Sudakov \cite{LLS} showed that $s=\Theta\big((m\log m)^{2/3}\big)$.

In this paper, we consider the \emph{isomorphic bisection} of a graph $G$: a triple $(A,B,\varphi)$ with $V(G)=A\sqcup B$ and $\varphi$ an isomorphism from the subgraph induced by the vertex set $A$, $G[A]$, onto $G[B]$. We say $G$ is \emph{bisectable} if it has an isomorphic bisection. We take all graphs to be finite, simple, and undirected, and use ``bisection" as vocabulary shortcut for ``isomorphic bisection". 
For cubic graphs, bisectability is Ando's conjecture, made in the 1990s and first stated in print in~\cite{AGLM}. It holds for $3$-edge-colourable
cubic graphs: two colour classes form a $2$-factor of even cycles, and colouring each cycle alternately gives two halves that induce matchings of the same size \cite{BL,AGLM}. Abreu, Goedgebeur, Labbate and Mazzuoccolo~\cite{AGLM} verified the conjecture by computer for all cubic graphs with at most $32$ vertices, and Das, Pokrovskiy and Sudakov~\cite{DPS} proved it for all sufficiently large connected cubic graphs. 

We show that the conjecture does not extend to $d$-regular graphs with $d\geq5$. It is clear that a bisectable graph must have even order, so for $d$ even, $K_{d+1}$ is directly not bisectable for parity reasons alone. We go further and prove the theorem below using a construction which attaches a dense block to a circulant graph, or a graph of girth at least $5$ depending on $d$. 

\begin{mainthm}{Theorem~\ref{thm:main}}
For all $d\geq 5$, there exists a connected $d$-regular graph of even order that is not bisectable. 
\end{mainthm}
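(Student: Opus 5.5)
The plan is to build $G$ from two parts. The first is a dense block $H$, a graph on $d+1$ vertices close to $K_{d+1}$. The second is a sparse, triangle-free $d$-regular host: a circulant graph $C_n(S)$ with a connection set $S$ chosen to avoid triangles, or, for other residues of $d$, a graph of girth at least $5$. I will join them by a small set of connecting edges so that the result is connected, $d$-regular and of even order.

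Concretely, take $H=K_{d+1}-M$, where $M$ is a small set of edges, for instance a matching of size $k$ with $k=1$ or $2$. Each vertex of $H$ covered by $M$ has one missing unit of degree. I delete a matching of the same total size from the host, and pair the freed half-edges of the host with the deficient vertices of $H$. The parameters are chosen so that:
\begin{itemize}
\item every vertex has degree $d$;
\item $|V(G)|$ is even (using the parity of $n$, or two copies of $H$ when needed);
\item every triangle of $G$ lies inside $H$, which holds because the connecting edges go to pairwise distant vertices of a host with girth at least $4$ (better, at least $5$).
\end{itemize}
For even $d$ I expect the girth-$5$ host to be needed to make the degree bookkeeping work. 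For odd $d$, a bipartite-type circulant should suffice.

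Next, suppose $(A,B,\varphi)$ is a bisection. An isomorphism preserves all clique counts, so $G[A]$ and $G[B]$ contain the same number of triangles, and indeed the same number of copies of $K_r$ for every $r\ge3$. Since all cliques of size at least $3$ live in $H$, write $s=|A\cap V(H)|$ and $d+1-s=|B\cap V(H)|$. Then $G[A\cap V(H)]$ is $K_s$ minus those edges of $M$ lying inside $A$, and similarly for $B$. Counting copies of $K_r$ for the largest $r$ present already forces near-balance. For instance, the clique number of $G[A]$ is about $s$ minus the number of $M$-edges inside $A$, and likewise for $B$. Combining equality of the clique numbers with equality of the triangle counts
\[
\binom{s}{3}-m_A(s-2)=\binom{d+1-s}{3}-m_B(d-1-s),
\]
where $m_A$ and $m_B$ count the $M$-edges inside each side, should leave only a few possibilities for $(s,m_A,m_B)$. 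I will choose $M$ so that none of them is consistent. When $d+1$ is odd, $s\neq d+1-s$ and the imbalance must be compensated exactly by $M$; choosing $|M|$ small, such as $1$, makes this impossible for $d\ge5$. When $d+1$ is even, I will use an $M$ of odd size, or some asymmetric structure, so that a balanced split of $V(H)$ cannot give isomorphic pieces.

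The main obstacle is making this counting airtight. A single invariant like the triangle count can coincide accidentally: near-complete graphs on $s$ and $s+1$ vertices minus matchings can have equal triangle counts. My first attempt will therefore use the finest clique data, namely the full clique number together with the number of maximum cliques, possibly refined by the degree sequences of the triangle-containing parts, since $\varphi$ maps the triangle-vertices of $A$ onto those of $B$. This should reduce everything to a finite case check in $s$, which I would verify by hand for small $d$ and by a monotonicity argument for large $d$.

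A secondary check is that the host construction really exists for every $d\ge5$ with the right parity and distance conditions. For this I would write down explicit circulants $C_n(\{1,3,5,\dots\})$ for odd $d$, and use known girth-$5$ regular graphs, such as suitable cages or lifts, for even $d$.
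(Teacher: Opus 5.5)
Your even-$d$ case works with $|M|=1$ and is essentially the paper's construction. $K_{d+1}-uv$ has odd order, you delete a host edge $xy$ and add $ux$, $vy$, and triangle counts alone already exclude every split of the block. A triangle-free circulant of odd order is enough there; girth $5$ is not needed.

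The gap is odd $d$, including $d=5$. Your block $K_{d+1}-M$ then has \emph{even} order, so nothing forces an unbalanced split. For any matching $M$, of odd size or not, take the balanced split that separates the two ends of every $M$-edge. It gives $K_{(d+1)/2}$ on both sides, so all the clique data you plan to compare coincide, and no contradiction can come out.

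In fact the simplest instance of your construction is bisectable. Take $M=\{uv\}$ and a bipartite circulant $C_n(S)$ ($n$ even, all elements of $S$ odd), delete the edge $xy$, and add $ux$, $vy$.
The reflection $\sigma(i)=x+y-i$ is an automorphism of $C_n(S)-xy$ that swaps $x$ and $y$. It has no fixed point because $x+y$ is odd, since $y-x\in S$ is odd.
Choose $A'$ containing one point of each $\sigma$-orbit, with $x\in A'$. Split $V(H)=P\sqcup Q$ evenly with $u\in P$ and $v\in Q$, and set $A=A'\cup P$, $B=\sigma(A')\cup Q$.
Then $\sigma|_{A'}$, together with any bijection $P\to Q$ sending $u\mapsto v$, is an isomorphism $G[A]\to G[B]$. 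Both block parts are complete, and the only block--host edges are $ux$ in $G[A]$ and $vy$ in $G[B]$.

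The missing idea is that the dense block must have odd order for \emph{every} $d$, because that parity is the whole source of the obstruction. By the same token, two copies of an odd block would cancel it, since the surpluses can fall on opposite sides.

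For odd $d$ this forces you off $K_{d+1}$. The paper uses $W_d=K_{d+2}-F_d$, where $F_d$ is a path $qpz$ plus a perfect matching of the remaining $d-1$ vertices. Then $p$ is the only vertex of degree $d-1$, and a single edge $pw$ attaches the block. On the host side, a new vertex $w$ is joined to the $d-1$ ends of a deleted induced matching: of a triangle-free circulant for $d\ge7$, and of a girth-$5$ graph for $d=5$.

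With this block, the generic argument (every vertex on the larger side of $W$ lies in a triangle there) needs that side to have at least $6$ vertices. So $d=7$ and $d=5$ require a case analysis by hand. For $d=5$ triangles alone are not enough: take $A\cap W$ to be the ends of the two $M_5$-edges (inducing $C_4$) and $B\cap W=\{p,q,z\}$, and both sides are triangle-free.
That is why the paper adds induced $4$-cycles as a second invariant and uses a girth-$5$ host. This is needed for $d=5$, not for even $d$ as you anticipated.
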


More precisely, for all $d\ge5$ and every even $n\ge 2d^2+200$ we show that there is a connected $d$-regular graph of order $n$ that is not bisectable. It suffices that $n\ge4d+8$ if $d$ is even, $n\ge2d^2-d+3$ if $d\ge7$ is odd, and $n\ge208$ if $d=5$. 

The construction begins with a dense block of odd order, which any partition of the vertices must split unevenly. Lemma \ref{lem:gadget} shows that this prevents an isomorphic bisection, provided every triangle lies inside the block and, when \(d=5\), every induced 4-cycle does as well. We then attach an auxiliary graph to obtain a connected \(d\)-regular graph of even order while preserving these conditions. For \(d\ge6\), triangle-free circulants suffice (Lemma \ref{lem:circulant}). For \(d=5\), we use a two-layer graph of girth at least 5 (Lemma \ref{lem:girth5}). For the proof below, remember Mantel’s Theorem \cite{Mantel}: a graph on $n$ vertices with no triangle contains at most $n^2/4$ edges.

\section{Results}

For even $d$ let $W_d=K_{d+1}-uv$, with $u,v$ of degree $d-1$. For odd $d$ let
$W_d=K_{d+2}-F_d$, where $F_d$ is the disjoint union of a path $qpz$ and a set $M_d$ of $(d-1)/2$
further disjoint edges, with $p$ of degree $d-1$. All other vertices have degree
$d$ in $W_d$, and $|V(W_d)|$ is odd.

\begin{lemma}\label{lem:gadget}
Let $d\ge5$ and let $G$ be a $d$-regular graph of even order with a set $W$ of vertices such that
$G[W]\cong W_d$. Suppose that every triangle of $G$ lies in $W$, and, if $d=5$, that every induced
$4$-cycle of $G$ lies in $W$. Then $G$ is not bisectable.
\end{lemma}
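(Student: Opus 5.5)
The plan is to extract from a hypothetical bisection an isomorphism invariant that depends only on how $W$ is split, and then use the odd order of $W_d$ to show the two sides cannot match. Suppose $(A,B,\varphi)$ is an isomorphic bisection of $G$. Put $W_A=W\cap A$, $W_B=W\cap B$, $k=|W_A|$ and $m=|W_B|$. Since $|V(W_d)|$ is odd, $k\neq m$, and by symmetry (replacing $\varphi$ by $\varphi^{-1}$) we may assume $k>m$, so $k\ge m+1$. Every triangle of $G[A]$ is a triangle of $G$, hence lies in $W$ and therefore in $W_A$. Likewise every triangle of $G[B]$ lies in $W_B$. When $d=5$, the same holds for induced $4$-cycles: an induced $4$-cycle of $G[A]$ is an induced $4$-cycle of $G$, because inducedness is inherited. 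The isomorphism $\varphi$ carries triangles to triangles and induced $4$-cycles to induced $4$-cycles. Hence the number $t(X)$ of triangles, and for $d=5$ also the number $c(X)$ of induced $4$-cycles, satisfies $t(G[W_A])=t(G[W_B])$ and $c(G[W_A])=c(G[W_B])$. So everything reduces to a statement about $W_d$ alone: no split of $V(W_d)$ into parts of sizes $k>m$ gives equal triangle counts (and, for $d=5$, equal triangle and induced-$C_4$ counts).

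\textbf{Step 1: the case $d$ even.} Here $W_d=K_{d+1}-uv$. The part $W_A$ induces either $K_k$ or $K_k-uv$, so
\[
t(G[W_A])\ge \binom{k}{3}-(k-2), \qquad t(G[W_B])\le\binom{m}{3}.
\]
Since $\binom{k}{3}-\binom{m}{3}\ge\binom{m}{2}$, equality is impossible once $\binom{m}{2}>k-2$. The remaining cases have $m$ small. Because $k+m=d+1\ge 7$, in these cases $k\ge 4$. Then $G[W_A]$ contains a triangle avoiding one of $u,v$, whereas $t(G[W_B])$ is $0$ for $m\le 2$, and for $m=3$ the count comparison $4-2>1$ already works. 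I will organise this as a short inequality check, covering the general case together with the few small cases.

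\textbf{Step 2: the case $d\ge 7$ odd.} Here $W_d=K_{d+2}-F_d$, and the missing edges $F_d$ form a graph of maximum degree $2$ with $(d+3)/2$ edges. Each missing edge inside a part destroys at most $k-2$ triangles, and a missing edge is a chord only if both its ends lie in the same part. For the bound I will use the crude estimate that $W_A$ spans at most $\lfloor k/2\rfloor+1$ missing edges, since $F_d$ is a path $qpz$ plus a matching. This gives
\[
t(G[W_A])\ge\binom{k}{3}-\left(\left\lfloor \tfrac k2\right\rfloor+1\right)(k-2), \qquad t(G[W_B])\le\binom m3.
\]
Using $k\ge m+1$, $k+m=d+2\ge 9$, and in particular $k\ge 5$, the difference $\binom{k}{3}-\binom{m}{3}\ge\binom m2$ dominates except for small $m$. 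In those exceptional cases I will argue directly that $W_A$ contains a triangle while $W_B$ contains few or none, tightening the count where necessary.

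\textbf{Step 3: the case $d=5$, which I expect to be the main obstacle.} Now $W_5$ is $K_7$ minus a path $qpz$ and two disjoint edges, so $k+m=7$ with $(k,m)\in\{(4,3),(5,2),(6,1),(7,0)\}$. For $m\le 2$, the part $W_B$ has no triangles. Meanwhile any $5$ vertices of $W_5$ span a triangle, since the complement restricted to them has at most $4$ edges and maximum degree $2$, so Mantel's theorem or a direct check applies. This settles those cases. The delicate case is $(k,m)=(4,3)$. Here triangle counts alone can coincide, for instance when a $4$-set spans two disjoint missing edges and so induces a $C_4$ with no triangle, while the $3$-set is independent or triangle-free. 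This is exactly why the hypothesis on induced $4$-cycles is there. A $3$-set never contains an induced $C_4$, so $c(G[W_B])=0$, and I must show that every $4$-set $W_A$ either has $c>0$ or has $t(G[W_A])\ne t(G[W_B])$ for the complementary $3$-set. I plan to carry this out as a finite case analysis on how many missing edges the $4$-set spans, using the explicit structure of $F_5$.
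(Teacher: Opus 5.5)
Your reduction to counting triangles (and, for $d=5$, induced $4$-cycles) inside $W_A$ and $W_B$ is sound, and Steps~1 and~3 can be completed along your lines. Step~2, however, has a genuine gap. You compare the one-sided estimate $t(G[W_A])\ge\binom k3-(\lfloor k/2\rfloor+1)(k-2)$ with $t(G[W_B])\le\binom m3$. This does not break down for small $m$; it breaks down for the \emph{largest} $m$, the balanced split $k=(d+3)/2$, $m=k-1$, and it does so for every odd $d\ge7$, because $\binom k3-\binom{k-1}3=\binom{k-1}2<(\lfloor k/2\rfloor+1)(k-2)$ for every $k\ge3$. For $d=9$, for example, you only get $t(G[W_A])\ge 20-16=4$ against $\binom53=10$. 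So the exceptional cases form an infinite family, not a finite list of small cases. In them $W_B$ can contain many triangles (all $10$ when $W_B$ spans no edge of $F_9$), so your fallback ``$W_A$ contains a triangle while $W_B$ contains few or none'' does not apply. The bound $\binom m3$ discards exactly the information that matters: $W_B$ also loses triangles to missing edges.

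You need an additional idea here, and there are two ways to supply it.

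\emph{The paper's route.} Change the invariant to $|\tau(X)|$, the number of vertices lying in a triangle of $G[X]$. If $X\subseteq W$ and $|X|\ge6$, every vertex of $X$ has at least $|X|-3\ge3$ neighbours in $X$, and two of them are adjacent since $F_d$ is triangle-free. Hence $|\tau(W_A)|=k>m\ge|\tau(W_B)|$, the trivial bound on the smaller side now suffices, and only $d=7$, $k=5$ (and $d=5$) need hand analysis.

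\emph{A two-sided triangle count.} Use that $F_d$ covers all of $W$, with every degree $1$ except $\deg_{F_d}p=2$. Let $a,b,c$ be the numbers of $F_d$-edges inside $W_A$, inside $W_B$, and across, and let $\delta_A,\delta_B\in\{0,1\}$ indicate whether $p\in W_A$, resp.\ $p\in W_B$. Then $2a+c=k+\delta_A$ and $2b+c=m+\delta_B$, so in the balanced case $a=b+\delta_A$. Inclusion--exclusion gives $t(G[W_A])=\binom k3-a(k-2)+\epsilon_A$, where $\epsilon_A=1$ if $\{q,p,z\}\subseteq W_A$ and $0$ otherwise, and similarly for $W_B$. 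The difference of the two counts is then $\binom{k-1}2-(k-2)-b+\epsilon_A$ if $p\in W_A$, and $\binom{k-1}2-b-\epsilon_B$ if $p\in W_B$. Since $2b\le m+\delta_B$, both are positive for $k\ge5$.

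Two smaller repairs are also needed. In Step~1, use $m\le k-1$ to get $\binom k3-\binom m3\ge\binom{k-1}2>k-2$ for all $k\ge4$; this leaves no exceptional cases at all. In Step~3, ``at most $4$ missing edges of maximum degree $2$'' does not by itself force a triangle, as $K_5-(K_3\sqcup K_2)=K_{2,3}$ shows. But the four edges of $F_5$ cover all seven vertices, so at most $3$ of them survive on five vertices, and then Mantel applies.
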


\begin{proof}
Let $F$ be $F_d$ if $d$ is odd, and the single edge $uv$ if $d$ is even. In either case $G[W]=K_W-F$, where $F$ has
maximum degree at most $2$ and contains no triangle. For $S\subseteq V(G)$ let $\tau(S)$ be the set of
vertices lying in a triangle of $G[S]$; by assumption $\tau(S)=\tau(S\cap W)$. 

Remark that if $S\subseteq W$ and $|S|\ge6$, or $d$ is even and $|S|\ge4$, then $\tau(S)=S$. 
Indeed, if $|S|\ge6$, each vertex in $S$ has at least $|S|-3\ge3$ neighbours in $S$ since the ``worst" case for the $F$ removed is the disjoint union of a two edge path and $(d-1)/2$ disjoint edges. Two of any three of these neighbours are
adjacent because $F$ is triangle-free and we are in a complete graph on at least $6$ vertices. If $d$ is even, $G[S]$ is $K_S$ or $K_S$ minus an edge. Thus every vertex of $S$ lies in a triangle. 

Now towards a contradiction, let $(A,B,\varphi)$ be a bisection. So $\varphi$ maps the triangles of $G[A]$ to those of $G[B]$ and $|\tau(A\cap W)|=|\tau(B\cap W)|$. Let $S = A \cap W$ and $k = |S|$. Since $|W|$ is odd by construction, we may assume $k=|A\cap W|>|B\cap W|$. If
$\tau(A\cap W)=A\cap W$ then $|\tau(B\cap W)|\le|B\cap W|<k$, a contradiction. This
settles even $d \geq 6$: $|V(W_d)| = d+1$ so $k = |S|\ge(d+2)/2\ge4$. As well as odd $d\ge9$ (where $k\ge(d+3)/2\ge6$); and leaves
$k=5$ for $d=7$, and $k\in\{4,5\}$ for $d=5$. 

\medskip

First, for $d=7$, $k=5$. Let $S=A\cap W$. If $|\tau(S)|=5$ we are done since $|B\cap W| = 4$. So let $v\in S\setminus\tau(S)$. It has at least $5-1-2=2$
neighbours in $S$, so it must have exactly two (else a triangle would form), they are joined in $F$, and
$v$ has two $F$-neighbours in $S$. Hence $(v=p), q,z\in S$, and the other two vertices of $S$ form an
edge of $M_7$. So either $|\tau(S)|=5>|B\cap W|$, contradicting
$|\tau(S)|=|\tau(B\cap W)|$, or
$S=\{p,q,z,r,r'\}$ with $rr'\in M_7$.
In the latter case, both $\{q,z,r\}$ and $\{q,z,r'\}$ are triangles,
while $p$ lies in none, so $\tau(S)=\{q,z,r,r'\}$.
Meanwhile, $B\cap W$ consists of the endpoints of the other two edges
of $M_7$. Its induced subgraph is therefore $K_4$ with two disjoint
edges deleted, i.e. a $4$-cycle. Thus $\tau(B\cap W)=\emptyset$,
again contradicting $|\tau(S)|=|\tau(B\cap W)|$ (see Figure \ref{fig:d7}). 


Now for $d=5$ and $k=5$, $k=|A\cap W|= 5$ implies $|B\cap W|=2$, so we would need $\tau(A\cap W)=\emptyset$ for the bisection. The four edges of $F_5$ cover all
seven vertices of $W$, so at most three of them lie in the five vertices of $A\cap W$, and $G[A\cap W]$
is a graph on five vertices with at least $7$ edges, thus not triangle-free by Mantel's theorem. 

Finally, let $d=5$, $k=4$. Any four vertices of $W$ contain at most two edges of $F_5$, so $G[A\cap W]$ is $K_4$,
$K_4-e$, $C_4$, or a triangle with a pendant edge (when three of the four vertices of $A\cap W$ are $\{p,q,z\}$). In these cases, $|\tau(A\cap W)|$ is $4,4,0,3$ respectively. $|\tau(A\cap W)| = 4$ would be a direct contradiction. If $|\tau(A\cap W)| = 3$, then it is the triangle with pendant edge, which makes $|\tau(B\cap W)| = 3$ impossible: $p, q, z \in S$ so whichever three of the remaining four vertices are in $B\cap W$ cannot induce a triangle. Finally if $S$ is triangle-free, it contains a $4$-cycle (by hypothesis, all of which are in $W$). So we cannot have an isomorphism with the three vertices of $B \cap W$. 
\end{proof}

\begin{figure}[H]
\centering
\begin{tikzpicture}[scale=0.8,
  v/.style={circle,fill=black,inner sep=1.5pt},
  g/.style={thick},
  f/.style={thick,dotted},
  cr/.style={gray!45},
  c4/.style={line width=2pt}]
\begin{scope}
  \coordinate (p) at (90:1.4);   \coordinate (z) at (18:1.4);  \coordinate (e) at (-54:1.4);
  \coordinate (r) at (-126:1.4); \coordinate (q) at (162:1.4);
  \draw[g] (z)--(e)--(r)--(q) (p)--(e) (p)--(r) (q)--(z) (q)--(e) (z)--(r);
  \draw[f] (p)--(q) (p)--(z);
  \node[v,label=90:$p$] at (p) {};   \node[v,label=18:$z$] at (z) {};
  \node[v,label=-54:$e$] at (e) {};  \node[v,label=-126:$r$] at (r) {};
  \node[v,label=162:$q$] at (q) {};
  \node at (0,-2.35) {(a) $S=\{p,q,z,r,e\}$};
\end{scope}
\begin{scope}[xshift=5.5cm]
  \coordinate (p) at (90:1.4);   \coordinate (z) at (18:1.4);  \coordinate (r2) at (-54:1.4);
  \coordinate (r) at (-126:1.4); \coordinate (q) at (162:1.4);
  \coordinate (e) at (3.75,0.85);  \coordinate (f) at (5.45,0.85);
  \coordinate (e2) at (5.45,-0.85); \coordinate (f2) at (3.75,-0.85);
  \foreach \a in {p,q,z,r,r2} \foreach \b in {e,f,e2,f2} \draw[cr] (\a)--(\b);
  \draw[g] (q)--(z) (q)--(r) (q)--(r2) (z)--(r) (z)--(r2) (p)--(r) (p)--(r2);
  \draw[f] (p)--(q) (p)--(z) (r)--(r2);
  \draw[g] (e)--(f)--(e2)--(f2)--cycle;
  \draw[f] (e)--(e2) (f)--(f2);
  \node[v,label=90:$p$] at (p) {};    \node[v,label=18:$z$] at (z) {};
  \node[v,label=-54:$r'$] at (r2) {}; \node[v,label=-126:$r$] at (r) {};
  \node[v,label=162:$q$] at (q) {};
  \node[v,label=90:$e$] at (e) {};     \node[v,label=45:$f$] at (f) {};
  \node[v,label=-45:$e'$] at (e2) {};  \node[v,label=-90:$f'$] at (f2) {};
  \node at (2.2,-2.35) {(b) $S=\{p,q,z,r,r'\}$, $B\cap W=\{e,e',f,f'\}$};
\end{scope}
\end{tikzpicture}
\caption{The case $d=7$, $k=5$ in the proof of Lemma~\ref{lem:gadget}, with $M_7=\{rr',ee',ff'\}$.
Solid lines are edges of $G$, dotted lines are edges of $F_7$ (non-edges of $G$). Cases show the contradictions depending on whether the neighbours of $p$ in $S$ are adjacent to each other in $S$ or in $F_7$.}
\label{fig:d7}
\end{figure}


Now we give a lemma that proves key to the construction of what will sit outside of $W_d$. 
For $m\ge3$ and $S\subseteq\mathbb Z_m\setminus\{0\}$ with $S=-S$, the circulant $C_m(S)$ has vertex set
$\mathbb Z_m$ and $x\sim y$ iff $y-x\in S$; it is $|S|$-regular. Let
$I_m=\{x\in\mathbb Z: m/3<x<2m/3\}$, read modulo $m$.

\begin{lemma}\label{lem:circulant}
If $S\subseteq I_m$ and $S=-S$, then $C_m(S)$ is triangle-free. If moreover $S$ contains two
consecutive integers, $C_m(S)$ is connected.
\end{lemma}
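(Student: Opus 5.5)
The triangle-freeness claim comes down to a sum-of-three-residues argument. A triangle $x,y,w$ in $C_m(S)$ gives differences $a=y-x$, $b=w-y$, $c=x-w$, all in $S$, with $a+b+c\equiv 0 \pmod m$. I will choose integer representatives of $a,b,c$ in the open interval $(m/3,2m/3)$, which is possible since $S\subseteq I_m$. Adding the three strict inequalities gives $m<a+b+c<2m$. So the integer $a+b+c$ lies strictly between $m$ and $2m$ and is not a multiple of $m$, a contradiction. Hence $C_m(S)$ has no triangle. The hypothesis $S=-S$ is needed only so that $C_m(S)$ is an undirected graph. It is also consistent with $S\subseteq I_m$, because $x\mapsto m-x$ maps the open interval $(m/3,2m/3)$ to itself.

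For connectivity, suppose $s,s+1\in S$. Then $x\sim x+s$ and $x+s\sim x+s+1$, so $x$ and $x+1$ are joined by a walk of length $2$ for every $x\in\mathbb Z_m$. Chaining these walks connects $0$ to every residue $1,2,\dots,m-1$, so $C_m(S)$ is connected. The regularity statement $|S|$ was already noted in the definition and needs no argument.

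Neither step is a real obstacle. The only point needing care is the first one: the representatives must be taken as actual integers in $(m/3,2m/3)$ rather than as residues, so that the sum bounds $m<a+b+c<2m$ are honest inequalities. I would also check the degenerate case of a small $m$ with $I_m$ empty; there the statement is vacuous.
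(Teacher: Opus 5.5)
Your triangle-freeness argument is correct and is essentially the paper's. The paper uses $S=-S$ to rewrite $s_1+s_2+s_3\equiv 0$ as $s_1+s_2\equiv -s_3\in S$ and then observes that $I_m$ is sum-free. You instead bound the three-term sum directly by $m<a+b+c<2m$. This is the same observation, and your version uses the symmetry of $S$ only to make the graph undirected.

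The connectivity step, however, is wrong as written. The adjacency $x+s\sim x+s+1$ would require $1\in S$. That never happens for $m\ge 3$, because no integer congruent to $1$ modulo $m$ lies in $(m/3,2m/3)$, so $1\notin I_m$. Even granting that adjacency, the walk $x,\,x+s,\,x+s+1$ ends at $x+s+1$, not at $x+1$. The correct length-$2$ walk is $x,\;x+s+1,\;x+1$: the first step has difference $s+1\in S$ and the second has difference $-s\in S$. Equivalently, $x$ and $x+1$ have the common neighbour $x+s+1$. With this repair, chaining the walks gives connectivity as you intended. This is the paper's remark that the subgroup generated by $S$ contains $(s+1)-s=1$.
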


\begin{proof}
$I_m$ is sum-free modulo $m$: for $x,y\in I_m$ we have $2m/3<x+y<4m/3$, so the residue of $x+y$ lies in
$(2m/3,m)\cup[0,m/3)$. A triangle $x,y,z$ would give $s_1+s_2+s_3=0$ with $s_1=y-x$, $s_2=z-y$,
$s_3=x-z\in S$, so $s_1+s_2=-s_3\in S$, which is impossible. If $s,s+1\in S$, the subgroup generated by
$S$ contains $1$.
\end{proof}

Before proving our main result, we give below the construction that will be necessary for $d=5$. 

\begin{lemma}\label{lem:girth5}
For $m\ge100$ let $B_m$ be the graph on $\mathbb Z_m\times\{0,1\}$ with edges
\[
  (x,0)(x+1,0),\qquad (x,1)(x+20,1),\qquad (x,0)(x+t,1)\ \ (t\in T=\{0,5,12\}).
\]
Then $B_m$ is connected, $5$-regular, of order $2m$ and girth at least $5$.
\end{lemma}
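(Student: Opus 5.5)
The plan is to verify the four claims in turn, with girth the only one needing real work. Order and regularity are immediate. Each layer-$0$ vertex $(x,0)$ has two neighbours $(x\pm1,0)$ and three neighbours $(x+t,1)$, $t\in T$. Each layer-$1$ vertex $(y,1)$ has two neighbours $(y\pm20,1)$ and three neighbours $(y-t,0)$. Since $m\ge100$, the residues $\pm1$, $\pm20$ are nonzero and distinct mod $m$, and $0,5,12$ are distinct. So there are no loops or multi-edges, and $B_m$ is $5$-regular on $2m$ vertices. For connectivity, layer $0$ is a Hamilton cycle $C_m(\{\pm1\})$, and every $(y,1)$ is adjacent to $(y,0)$ via $t=0$; hence $B_m$ is connected.

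For girth at least $5$, I will show there are no cycles of length $3$ or $4$. I will encode a closed walk by its sequence of steps, each step being an element of $\mathbb Z$ together with a layer change. The possible steps are:
\begin{itemize}
\item $\pm1$ within layer $0$;
\item $\pm20$ within layer $1$;
\item $+t$ going from layer $0$ to layer $1$;
\item $-t$ going from layer $1$ to layer $0$.
\end{itemize}
A cycle of length $\ell\le4$ gives a closed walk with no immediate backtracking whose step sum is $\equiv0\pmod m$. The step sums are bounded in absolute value by $4\cdot20=80<m$. Hence the sum must be exactly $0$ in $\mathbb Z$, which reduces everything to a finite check independent of $m$. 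The number of inter-layer steps is even, so I split into cases according to that number: $0$, $2$, or $4$.

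\emph{Zero inter-layer steps.} The cycle lies in one layer. Sums of at most four terms $\pm1$ equal $0$ only via backtracking or cancelling pairs. A length-$4$ closed walk such as $+1,+1,-1,-1$ would revisit a vertex, so it is not a cycle. The same argument applies to $\pm20$.

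\emph{Two inter-layer steps.} The walk is a layer-$0$ segment of $a$ steps, then a crossing up by $t$, then a layer-$1$ segment of $b$ steps, then a crossing down by $-t'$, with $a+b\le2$. The condition is $\varepsilon+20\delta+t-t'=0$, where $|\varepsilon|\le a$ and $|\delta|\le b$. The differences $t-t'$ lie in $\{0,\pm5,\pm7,\pm12\}$. None of these equals $\pm1$, $\pm2$, $\pm20$, $\pm19$, $\pm21$, or $\pm40$, except $0$. The value $0$ forces $t=t'$ and the segments to be trivial or backtracking, which is not a cycle. This is a short table check.

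\emph{Four inter-layer steps.} This is a $4$-cycle alternating between layers: $t_1-t_2+t_3-t_4=0$ with consecutive $t$'s distinct. This requires $t_1+t_3=t_2+t_4$ with $\{t_1,t_3\}\ne\{t_2,t_4\}$ as multisets. That is, $T$ would need a nontrivial solution to $a+b=c+d$, in the sense that each of $t_1\neq t_2$, $t_2\neq t_3$, $t_3\neq t_4$, $t_4\neq t_1$ holds. The pairwise sums of $\{0,5,12\}$ with repetition are $0,5,10,12,17,24$, all distinct. So $T$ is a Sidon set and no such cycle exists.

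The main obstacle is keeping the case analysis honest: making sure every step pattern of length at most $4$ is covered, including triangles with one layer step and two crossings. Those triangles are handled by the two-crossing case with $a+b=1$, where we would need $t-t'=\pm1$ or $\pm20$, which is impossible.
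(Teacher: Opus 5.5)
Your proposal is correct and follows essentially the same route as the paper: the step sum must vanish exactly in $\mathbb Z$ because $4\cdot 20<m$, then a case split by $0$, $2$ or $4$ layer changes, comparing $(T-T)\setminus\{0\}=\{\pm5,\pm7,\pm12\}$ against the possible intra-layer contributions $\{\pm1,\pm2,\pm19,\pm20,\pm21,\pm40\}$, and using that $T$ is a Sidon set to rule out the alternating $4$-cycles. The only cosmetic difference is the zero-crossing case. The paper observes that cyclically consecutive same-layer steps of a cycle must have the same sign, which settles that case at once and is cleaner than your example-based remark about $+1,+1,-1,-1$.
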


\begin{proof}
Each vertex has two neighbours in its own layer (as $m>40$) and three in the other. For example, for some $(x, 0)$ in layer $0$, it has neighbours $(x+1, 0)$ and $(x-1, 0)$ in that layer, and $(x+t, 1)$ for $t \in \{0, 5, 12\}$ in the other. The first layer (layer $0$) is a Hamiltonian cycle, and every vertex of the second layer is joined to it (per our third rule with $t=0$). Thus $B_m$ is connected and $5$-regular. 

Now, towards a contradiction, assume there exists a cycle $\ell$ of length at most $4$. There are four kind of ways to walk through this cycle, and we can record the coordinates changes at each step: $\pm1$ inside layer $0$, $\pm20$ inside layer $1$, $+t$ from layer $0$ to layer $1$ and $-t$ back, with $t\in T$. Since we must return to the vertex we began with, the sum of coordinate changes must be $0 \mod m$. And since the cycle has at most four steps, each with coordinate change at most $20$, $4\cdot20 < m$ so the sum of coordinate changes must be exactly $0$. Remark that consecutive vertices of a cycle are distinct and no vertex repeats, so two consecutive
steps inside the same layer have the same sign, and a step $+t$ followed by $-t'$ (or $-t$ followed by
$+t'$) has $t\ne t'$. Note also that we must change layer an even amount of times: $0, 2, \text{ or }4$ times. 

Per construction of $B_m$, combinations of $t \in T$ have results in $T- T\setminus\{0\} = \{\pm5,\pm7,\pm12\}$. We can therefore see whether a cycle can occur based on the number of changes of layer. If there are none, the sum of the steps of size $1$ or $20$ without repeats does not amount to $0$. If we change layer twice, for a $3$-cycle, with say intra-layer step $s$ and traversing steps $\pm t$, we would need $s+t-t' = 0$. Since $s\in\{\pm1,\pm20\}$ and we know $T- T\setminus\{0\} = \{\pm5,\pm7,\pm12\}$, this is impossible. For a $4$-cycle, depending on whether the intra-layer steps happen in the same layer or a different one, we would need $t'-t\in\{\pm2,\pm40\}$ or $\{\pm19,\pm21\}$, again impossible. Finally, with four changes of layer, we would need $t_1-t_2+t_3-t_4=0$ (consecutively distinct). The six possible sums $t+t'$ are $0,5,10,12,17,24$, all distinct, so $\{t_1,t_3\}=\{t_2,t_4\}$ as multisets, which implies $t_1=t_2$ or
$t_1=t_4$.

In each case we get a contradiction, so $B_m$ has no cycle $\ell$ of length inferior or equal to $4$. 
\end{proof}

We can now prove the main result. 

\begin{theorem}\label{thm:main}
    For all $d\geq 5$, there exists a connected $d$-regular graph of even order that is not bisectable.
\end{theorem}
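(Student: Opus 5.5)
The plan is to build, for each $d\ge5$, a graph $G$ satisfying the hypotheses of Lemma~\ref{lem:gadget}, and then apply that lemma. The graph $G$ will consist of a copy of $W_d$ on a vertex set $W$, together with an auxiliary graph $H$ that is either a circulant $C_m(S)$ with $S\subseteq I_m$ (when $d\ge6$) or $B_m$ (when $d=5$). A few edges of $H$ will be deleted and replaced by edges joining the deficient vertices of $W$ to $H$. In $W_d$ the deficient vertices are $u,v$ for even $d$, each missing one edge. For odd $d$ they are $p$, missing one edge, and $q$, $z$ and the $d-1$ endpoints of $M_d$, each missing two edges. So the total deficiency is $D=2$ for even $d$, and $D=1+2\cdot 2+2(d-1)=2d+3$ for odd $d$. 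The deficiency is odd for odd $d$, which is consistent with $|W|=d+2$ being odd, since the degree sum $d|V|$ must be even.

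For the attachment, I will choose pairwise distant edges $x_iy_i$ of $H$ and delete them. Each deleted edge frees up two degree slots, so the number of deleted edges must be chosen to match $D$. For even $d$, I delete one edge $xy$ of $H=C_m(S)$ with $|S|=d$ and add the edges $ux$ and $vy$. For odd $d$, the deficiency $D$ is odd, so I will instead take $H$ to be $(d-1)$-regular and use a separate attachment. Concretely, for $d\ge7$ I take $H=C_m(S)$ with $|S|=d-1$ even, and I distribute the $2d+3$ required edge-ends among $2d+3$ distinct vertices of $H$. Each such vertex then has degree $d$. The remaining $m-(2d+3)$ vertices of $H$ still need one more edge each, and I will supply these with a perfect matching. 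For parity, $m-(2d+3)$ must be even, so $m$ is odd; then $n=m+d+2$ is even, as required. The matching edges must avoid creating triangles and induced $4$-cycles. The simplest choice is to make them a further circulant-like class $\pm s_0$ restricted appropriately, or I could use the case structure of the paper. For $d=5$, I will use $B_m$, delete $\lceil D/2\rceil$ far-apart edges, and handle the parity of $D=13$ by deleting one extra edge and adding one extra edge inside $H$ between far vertices. Alternatively I can choose the attachment so that the total count works out, which forces $n=2m+7$ to be odd. That is a problem, so I must adjust by using an even number of freed slots: I will delete $7$ edges, giving $14$ slots, and add one new edge between two freed endpoints at large distance.

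Next I will check the hypotheses of Lemma~\ref{lem:gadget}. Regularity and connectivity follow from Lemma~\ref{lem:circulant}, from Lemma~\ref{lem:girth5}, and from the fact that deleting a few edges from a large, well-connected graph while attaching $W$ keeps it connected. For triangles: any triangle not inside $W$ would have to use at least one vertex of $H$. Triangles inside $H$ are excluded by Lemma~\ref{lem:circulant} or by girth, and new edges only remove this possibility. A triangle through $W$ would need two adjacent vertices of $W$ joined to a common vertex of $H$, or one vertex of $W$ joined to two adjacent vertices of $H$. I will avoid both by sending every attaching edge to a distinct vertex of $H$, chosen pairwise at distance at least $3$ in $H$. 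For $d=5$, an induced $4$-cycle leaving $W$ would similarly need two attached vertices of $H$ at distance at most $2$, or a $4$-cycle inside $H$, which girth excludes. Choosing the attachment points at distance at least $3$ handles this, using $m\ge100$.

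The main obstacle is the bookkeeping for odd $d$. The deficiency $2d+3$ is large and odd, and I must find a $d$-regular completion of even total order without creating triangles, and for $d=5$ without creating induced $4$-cycles either. All attachment points must be pairwise far apart and must not be neighbours of other attachment points' neighbours. This is where the lower bounds $n\ge2d^2-d+3$ and $n\ge208$ come from: a greedy choice of far-apart vertices needs roughly $D\cdot d$ excluded vertices per choice. I will first try a greedy selection argument: each chosen vertex excludes at most $1+d+d(d-1)$ others, so an order of about $(2d+3)\cdot d$ suffices. I will then verify the parity constraints case by case.
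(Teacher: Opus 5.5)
Your even-$d$ case is the paper's construction and is fine. The odd-$d$ part, however, rests on a miscount of the deficiency of $W_d$.

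In $W_d=K_{d+2}-F_d$ every vertex has degree $d+1$ in $K_{d+2}$. The vertices $q$, $z$ and the $d-1$ endpoints of $M_d$ each lose exactly one edge to $F_d$, so they already have degree $d$. Only $p$, which has $F_d$-degree $2$, has degree $d-1$; the definition of $W_d$ states this explicitly. So the deficiency is $D=1$, not $2d+3$. Attaching two edges each to $q$, $z$ and the endpoints of $M_d$, as you propose, gives those vertices degree $d+2$, so the graph is not $d$-regular. The greedy choice of $2d+3$ far-apart attachment points and the completing matching are therefore aimed at the wrong target.

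Your $d=5$ repair fails independently of $D$. The vertex set $W\sqcup V(B_m)$ has odd size $2m+7$, and a $5$-regular graph cannot have odd order. Deleting or adding edges does not change the order, so you must add a vertex. Your slot count is also inconsistent: $13$ slots plus one extra edge uses $15\neq 14$.

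The missing idea is to make the outside part a graph $H$ of \emph{odd} order in which exactly one vertex $w$ has degree $d-1$ and all others have degree $d$, and then join $w$ to $p$ by a single edge. The paper starts from a $d$-regular graph of even order: $C_m(S)$ with $m$ even and $m/2\in S$ for odd $d\ge7$, and $B_m$ for $d=5$. It deletes $(d-1)/2$ edges forming an induced matching (for $d=5$, two edges at distance at least $3$) and adds a new vertex $w$ adjacent to all their endpoints.
\begin{itemize}
\item The induced-matching condition makes $N(w)$ independent, so no triangle passes through $w$.
\item For $d=5$, the distance condition keeps the girth of $H$ at least $5$. Since $pw$ is a bridge, every cycle of length at most $4$ lies in $W_5$ or in $H$, hence in $W_5$, as Lemma~\ref{lem:gadget} requires.
\end{itemize}

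Once $D=1$ is corrected, your own plan for odd $d\ge7$ can be salvaged. Take $m$ odd and $S_0=J\cup(-J)$, with $J$ a set of $(d-1)/2$ consecutive integers in $(m/3,(m-1)/2)$. Pick $x_0\in\mathbb Z_m$ and delete it from the Hamiltonian cycle $C_m(\{\pm(m-1)/2\})$; the result is a path on $m-1$ vertices. Add a perfect matching of that path to $C_m(S_0)$, and join $x_0$ to $p$. The outside part is a subgraph of $C_m(S_0\cup\{\pm(m-1)/2\})$ with $S_0\cup\{\pm(m-1)/2\}\subseteq I_m$, hence triangle-free by Lemma~\ref{lem:circulant}.

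As written, though, you leave the matching unspecified. For $d=5$ you still need an outside graph of odd order, which $B_m$ on its own is not.
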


\begin{proof}
For $d\ge6$ even and even $n\ge4d+8$, let $m=n-d-1$, which is odd and at least $3d+7$. The open interval
$(m/3,m/2)$ contains at least $m/6-1\ge d/2$ consecutive integers; let $J$ be a set of $d/2$ consecutive of them
and $S=J\cup(-J)\subseteq I_m$. By Lemma~\ref{lem:circulant}, $C_m(S)$ is a connected triangle-free
$d$-regular graph. Delete an edge $xy$ of $C_m(S)$, add $W_d$, and add the edges $ux$ and $vy$ (where $uv$ was a non-edge per our construction of $W_d$ in the case where $d$ is even). The result
$G$ is $d$-regular of order $n$, and connected, since $x$ and $y$ are joined through $W_d$. Every
triangle of $G$ lies in $W_d$: $C_m(S)$ is triangle-free, and a triangle meeting both parts would use $ux$
or $vy$, but $u$ and $x$ have no common neighbour, and neither have $v$ and $y$. Thus, by
Lemma~\ref{lem:gadget}, $G$ is not bisectable.

Now for $d\ge7$ odd and even $n\ge2d^2-d+3$, let $m=n-d-3$, which is even and at least $2d(d-1)\ge3d+3$. Let
$J$ be a set of $(d-1)/2$ consecutive integers in $(m/3,m/2)$ and $S=\{m/2\}\cup J\cup(-J)\subseteq I_m$;
then $C_m(S)$ is a connected triangle-free $d$-regular graph. It has an induced matching
$x_1y_1,\dots,x_sy_s$ with $s=(d-1)/2$. Indeed, choosing edges greedily, each choice $xy$ excludes all edges
meeting $N[x]\cup N[y]$, which is at most $2d^2$. And $C_m(S)$ has $dm/2 \geq d \cdot 2d(d-1)/2 \geq 2d^2 \cdot (d-1)/2 = 2d^2s$ edges, so the matching exists. Now delete these $s$ edges,
add a vertex $w$ adjacent to $x_1,y_1,\dots,x_s,y_s$, add $W_d$ and the edge $pw$. The result $G$ is
$d$-regular of order $(d+2)+(m+1)=n$ and connected (each deleted edge $x_iy_i$ is replaced by the path
$x_iwy_i$). The neighbours of $w$ are pairwise non-adjacent, since the matching is induced and its edges
were deleted, and $p,w$ have no common neighbour; so every triangle of $G$ lies in $W_d$, and
Lemma~\ref{lem:gadget} applies.

Finally, for $d=5$ and even $n\ge208$, let $m=(n-8)/2\ge100$ and $h=\lfloor m/2\rfloor$. In $B_m$ of Lemma \ref{lem:girth5}, the edges
$e_1=(0,0)(1,0)$ and $e_2=(h,0)(h+1,0)$ are at distance at least $3$: each step changes the first
coordinate by at most $20$, and their first coordinates are at circular distance at least $m/2-2\ge48$.
Delete $e_1,e_2$ and add a vertex $w$ adjacent to their four ends; call the resulting graph $H$. A cycle through
$w$ leaves and re-enters $w$ at two of these ends: if they are the ends of the same $e_i$, their distance in
$B_m-e_i$ is at least $4$ because $B_m$ has girth at least $5$; otherwise it is at least $3$. So $H$ has
girth at least $5$; it is connected of order $2m+1$, and all its degrees are $5$ except $w$ which has $\deg (w)=4$. Add
$W_5$ and the edge $pw$. The resulting graph $G$ is connected, $5$-regular, of order $7+2m+1=n$. Since $pw$ is a
bridge, every cycle of length at most $4$ lies in $W_5$ or in $H$, hence in $W_5$, and
Lemma~\ref{lem:gadget} applies again.

Finally $4d+8$, $2d^2-d+3$ and $208$ are at most $2d^2+200$, hence the bound mentioned in the introduction. 
\end{proof}

\newpage

\section*{Funding}

No specific funding was received for this work.

\section*{Declaration of competing interests}

The author has no competing interests to disclose. 

\end{document}